\newtheorem{theorem}{Theorem}[section]
\newtheorem{lemma}[theorem]{Lemma}
\theoremstyle{definition}
\newtheorem{definition}[theorem]{Definition}
\newtheorem{example}[theorem]{Example}
\theoremstyle{remark}
\newtheorem*{remark}{Remark}
\numberwithin{equation}{section}
\newcommand{\UU}{\operatorname{U}_{f}}
\newcommand{\HH}{\operatorname{H}_{m}}
\newcommand{\C}{\mbox{$\mathbb{C}$}}
\newcommand{\E}{\mbox{$\mathcal{E}$}}
\newcommand{\Jp}{\operatorname{J}_p}
\begin{document}

\title{On the regularity of the complex Hessian equation}

\author{Per \AA hag}
\address{Department of Mathematics and Mathematical Statistics\\ Ume\aa \ University\\ SE-901 87 Ume\aa \\ Sweden}
\email{per.ahag@umu.se}
\author{Rafa\l\ Czy{\.z}}
\address{Institute of Mathematics \\ Faculty of Mathematics and Computer Science \\ Jagiellonian University\\ \L ojasiewicza 6\\ 30-348 Krak\'ow\\ Poland}
\email{Rafal.Czyz@im.uj.edu.pl}\thanks{The second-named author was supported by the Priority Research Area SciMat under the program Excellence Initiative - Research University at the Jagiellonian University in Krak\'ow.}

\dedicatory{We raise our cups to Urban Cegrell, gone but not forgotten, gone but ever here. \\ Until we meet again in Valhalla!}

\keywords{compact K\"ahler manifold, complex Hessian equation, $m$-subharmonic function $(\omega,m)$-subharmonic function, pluripotential theory, stability}
\subjclass[2020]{Primary 32U05, 31C45,35B35; Secondary 32Q26, 53C55, 35J60.}

\begin{abstract}  This note aims to investigate the regularity of a solution to the Dirichlet problem for the complex Hessian equation, which has a density of the $m$-Hessian measure that belongs to $L^q$, for $q\leq\frac nm$.
\end{abstract}

\maketitle

\begin{center}\bf
\today
\end{center}

\section{Introduction}

Let $n\geq 2$, $1\leq m\leq n$. Consider the following Dirichlet problem for the complex Hessian equation in a bounded $m$-hyperconvex domain:
\begin{equation}\label{Intr1}
\HH(\UU)=fdV_{2n},
\end{equation}
or on a compact K\"ahler manifold:
\begin{equation}\label{Intr2}
\HH(\UU)=f\omega^n.
\end{equation}
Here $dV_{2n}$ is the Lebesgue measure in $\mathbb{R}^{2n}$, $\omega$ is a K\"ahler form on $X$ such that $\int_{X}\omega^n=1$, and  $f\in L^{\beta}$ is a
density function. When $\beta>\frac{n}{m}$, Dinew and Ko\l odziej~\cite{DK} proved that the solution $\UU$ is a continuous $m$-subharmonic functions. Later
on it was proved that the solution is in fact  H\"older continuous (see e.g.~\cite{B,Ch,KN,N}). Furthermore, if $\beta<\frac{n}{m}$, then the solution to~(\ref{Intr1}) need not to be bounded (see \cite{DK}), which is a significant contrast to the case when $m=n$ (see e.g.~\cite{CP,K1}).

Our aim is to consider the remaining singular cases, when $\beta \leq \frac mn $, to complete the overall picture. Under this assumption, we shall prove the regularity of a solution $u$ in the sense of being a member of Cegrell's energy class with bounded $(p,m)$-energy, $\mathcal E_{p, m}(\Omega)$. This is possible since both~(\ref{Intr1}), and~(\ref{Intr2}), are solvable within $\mathcal E_{p, m}(\Omega)$ (\cite{cegrell_pc,L3}). We say that a function is more regular if it belongs to $\mathcal E_ {p,m}(\Omega)$ for a larger $p>0$, and this is motivated by the fact that $\mathcal E_{p,m}(\Omega)\subset\mathcal E_{q,m}(\Omega)$, for $q<p$. In this note, we shall use the following auxiliary notation: for $\beta>1$, we say that a $m$-subharmonic function $u$ belongs to the class $\mathcal M_{\beta}$, if $\HH(u)=fdV_{2n}$, and $f\in L^{\beta}(\Omega)$. Our regularity result for~(\ref{Intr1}) is:

\bigskip

\noindent\textbf{Theorem~\ref{mt}.} \emph{Let $n\geq 2$, $1\leq m<n$, and assume that $\Omega\subset \mathbb C^n$ is a $m$-hyperconvex domain. For $1<\beta\leq\frac{n}{m}$, let $u\in \mathcal M_{\beta}$. Then the following holds:}
\begin{enumerate}\itemsep2mm
\item \emph{if $\beta=\frac{n}{m}$, then $u\in \mathcal E_{p,m}(\Omega)$ for all $p>0$. Furthermore, $u\in L^q(\Omega)$, for all $q>0$;}

\item \emph{if $\beta<\frac{n}{m}$, then $u\in \mathcal E_{p,m}(\Omega)$ for $p<\frac{(\beta-1)nm}{n-\beta m}$. Furthermore, $u\in L^q(\Omega)$, for all}
\[
0<q<\frac{\beta nm}{n-\beta m}\, .
\]
\end{enumerate}

\bigskip

In Example~\ref{examp1}, we provide an example that shows that the results in Theorem~\ref{mt} are sharp. Furthermore, in Example~\ref{examp2} we show that in the case $\beta=\frac nm$, the solution can be unbounded. The corresponding result of Theorem~\ref{mt} for the compact K\"ahler manifold case~(\ref{Intr2}), is stated and proved in Theorem~\ref{mt_Kahler}.

Commonly, stability results of equations of the type~(\ref{Intr1}) is done by estimating the supremum norm of continuous, or bounded solutions, $|\UU-\operatorname U_{f_j}|$, in terms of the $L^q$-norm of the difference of the densities, $\|f_j-f\|_q$. When it comes to the possibility of unbounded solutions, this type of estimation is not possible. Instead one can try to get weaker results, for examples $\operatorname U_{f_j}$ tends to $\UU$ in capacity, or in $L^q$-norm, when $f_j$ converges to $f$. We shall approach it differently, and consider the convergence in the quasimetric space $(\mathcal E_{p,m}(\Omega),\Jp)$. Here the quasimetric $\Jp$ is defined by:
\[
\Jp(u,v)=\left(\int_{\Omega}|u-v|^p(\HH(u)+\HH(v))\right)^\frac {1}{p+m},
\]
where $u,v\in \mathcal E_{p,m}(\Omega)$, $p>0$. In~\cite{ACquasi}, it was proved that $(\mathcal E_{p,m}(\Omega),\Jp)$ is a complete quasimetric space. Furthermore,  convergence in $(X,\Jp)$ implies convergence in capacity, and in $L^q$-norm, but the converse is not true. In this way, our approach is preferable. Our stability result states as follows.

\medskip

\noindent\textbf{Theorem~\ref{stab}.} \emph{Let $n\geq 2$, $1\leq m<n$, $1<\beta\leq\frac{n}{m}$, and assume that $\Omega\subset \mathbb C^n$ is a $m$-hyperconvex domain. Furthermore, let $f_j,f\in L^{\beta}(\Omega)$ be such that $\|f_j-f\|_{\beta}\to 0$, as $j\to \infty$. Then for $p<p_{\infty}$ it holds that $\Jp(\operatorname{U}_{f_j},\UU)\to 0$. Here,}
\[
p_{\infty}=\left\{
             \begin{array}{ll}
               \frac{(\beta-1)nm}{n-\beta m}, & \text { if } \ \beta<\frac{n}{m}\\[2mm]
               \infty, & \text { if } \ \beta=\frac{n}{m}.
             \end{array}
           \right.
\]
\emph{Moreover, there exist a constant $C$ depending only on $\|f\|_{\beta}$, $\beta$, $m$, $p$, and a constant $\gamma$ depending only on $\beta$, $m$, $p$ such that}
\[
\operatorname J_{p}(\operatorname{U}_{f_j},\UU)\leq C\|f_j-f\|_{\beta}^{\gamma}.
\]
\emph{The constant $\gamma$ can be taken as}
\[
\begin{cases}
\gamma=\frac {p}{m(p+m)} & \text{, when } p\leq\frac {m(\beta-1)}{\beta};\\[2mm]
\gamma\in \left(0,\frac {\beta-1}{(p+m)\beta}\right)=\left(0,\frac {n-m}{n(m+p)}\right) & \text{, when } p>\frac {m(\beta-1)}{\beta}=\frac {m(n-m)}{n} \text{ and }\beta=\frac nm;\\[2mm]
\gamma\in \left(0,\frac {\beta m(n+p)-n(m+p)}{m(p+m)(\beta(n+m)-n)}\right)  & \text{, when } p>\frac {m(\beta-1)}{\beta} \text{ and }\beta<\frac nm.
\end{cases}
\]

\section{Preliminaries}

  We start with the definition of $m$-subharmonic functions and the  complex Hessian operator.  Let $\Omega \subset \C^n$, $n\geq 2$, be a bounded domain, $1\leq m\leq n$, and define $\mathbb C_{(1,1)}$ to be the set of $(1,1)$-forms with constant coefficients. Then, set
\[
\Gamma_m=\left\{\alpha\in \mathbb C_{(1,1)}: \alpha\wedge (dd^c|z|^2)^{n-1}\geq 0, \dots , \alpha^m\wedge (dd^c|z|^2)^{n-m}\geq 0   \right\}\, .
\]

\begin{definition}\label{m-sh} Let $n\geq 2$, and $1\leq m\leq n$. Assume that $\Omega \subset \C^n$ is a bounded domain, and let $u$ be a subharmonic function defined on $\Omega$. Then we say that $u$ is \emph{$m$-subharmonic} if the following inequality holds
\[
dd^cu\wedge\alpha_1\wedge\dots\wedge\alpha_{m-1}\wedge(dd^c|z|^2)^{n-m}\geq 0\, ,
\]
in the sense of currents for all $\alpha_1,\ldots,\alpha_{m-1}\in \Gamma_m$. With $\mathcal{SH}_m(\Omega)$ we denote the set of all $m$-subharmonic functions defined on $\Omega$.
\end{definition}

The real counterpart of $m$-subharmonic functions was first introduced by Caffarelli, Nirenberg, and Spruck~\cite{CNS85}. The origin of these functions in the complex setting we focus on here was defined by Vinacua~\cite{vinacua1,vinacua2}. Later, B\l ocki~\cite{Blocki} generalized this concept to the unbounded case we see in Definition~\ref{m-sh}, and pluripotential methods were introduced. For those who want more background on $m$-subharmonic functions, we refer to~\cite{SA,AS2,DK2,L}.

\begin{definition}\label{prel_hcx} Let $n\geq 2$, and $1\leq m\leq n$. A bounded domain in $\Omega\subset\C^n$ is said to be  \emph{$m$-hyperconvex} if it admits a non-negative and $m$-subharmonic exhaustion function, i.e.  there exits a $m$-subharmonic $\varphi:\Omega\to [0,\infty)$ such that the closure of the set $\{z\in\Omega : \varphi(z)<c\}$ is compact in $\Omega$, for every $c\in (-\infty, 0)$.
\end{definition}

For further information on  $m$-hyperconvex domains see~\cite{ACH}. Next, we shall recall the function classes that are of our interest. The following notations shall be used
\[
e_{0,m}(u)=\int_{\Omega} \operatorname H_m(u) \quad \text{ and } \quad e_{p,m}(u)=\int_{\Omega} (-u)^p \operatorname H_m(u).
\]
We say that a $m$-subharmonic function $\varphi$ defined on a $m$-hyperconvex domain  $\Omega$ belongs to $\mathcal E^0_{m}(\Omega)$ if $\varphi$ is bounded,
\[
\lim_{z\rightarrow\xi} \varphi (z)=0 \quad \text{ for every } \xi\in\partial\Omega\, ,
\]
and
\[
\int_{\Omega} \operatorname{H}_m(\varphi)<\infty\, .
\]
\begin{definition} Let $n\geq 2$, $1\leq m\leq n$, and $p\geq0$. Assume that $\Omega$ is a bounded $m$-hyperconvex domain in $\mathbb C^n$.
We say that $u\in \E_{p,m}(\Omega)$ for $p\geq 0$, if $u$ is a $m$-subharmonic function defined on $\Omega$ such that there exists a decreasing sequence, $\{\varphi_{j}\}$, $\varphi_{j}\in\mathcal E^0_{m}(\Omega)$, that converges pointwise to $u$ on $\Omega$,
as $j$ tends to $\infty$, and $\sup_{j} e_{p,m}(\varphi_j)< \infty$. Another common notation of the case $p=0$, $\E_{0,m}(\Omega)$, is $\mathcal{F}_{m}(\Omega)$.
\end{definition}

In~\cite{L,L3}, it was proved that for $u\in \E_{p,m}(\Omega)$ the complex Hessian operator, $\operatorname H_m(u)$, is well-defined, and
\[
\operatorname{H}_m(u)=(dd^cu)^m\wedge (dd^c|z|^2)^{n-m}\, ,
\]
where $d=\partial +\bar{\partial}$, and $d^c=\sqrt{-1}\big(\bar{\partial}-\partial\big)$. Theorem~\ref{thm_holder} is essential when working with $\E_{p,m}(\Omega)$, $p>0$ (see Lu~\cite{L,L3}, and Nguyen~\cite{thien}).

\begin{theorem}\label{thm_holder} Let $n\geq 2$, $1\leq m\leq n$, and $p>0$. Assume that $\Omega$ is a bounded $m$-hyperconvex domain in $\mathbb C^n$. For $u_0,u_1,\ldots , u_m\in\E_{p,m}(\Omega)$, we have
\begin{multline*}
\int_\Omega (-u_0)^p dd^c u_1\wedge\cdots\wedge dd^c u_m\wedge (dd^c|z|^2)^{n-m}\\ \leq
C\; e_p(u_0)^{p/(p+m)}e_p(u_1)^{1/(p+m)}\cdots
e_p(u_m)^{1/(p+m)}\, ,
\end{multline*}
where $C\geq 1$ depends only on $p,m,n$ and $\Omega$.
\end{theorem}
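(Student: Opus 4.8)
The plan is to prove the inequality first for bounded functions and then pass to the limit. By definition of $\mathcal E_{p,m}(\Omega)$ each $u_i$ is a decreasing limit of functions in $\mathcal E^0_m(\Omega)$ with uniformly bounded $(p,m)$-energy, so if the estimate holds on $\mathcal E^0_m(\Omega)$ with a constant $C=C(p,m,n,\Omega)$ independent of the functions, the convergence of mixed Hessian measures along decreasing sequences together with Fatou's lemma recovers the general case. I would therefore assume throughout that $u_0,\dots,u_m\in\mathcal E^0_m(\Omega)$ are negative and vanish on $\partial\Omega$, so that integration by parts (Stokes) carries no boundary contribution and every integral below is finite. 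As a consistency check, note that the claimed weights sum to one, $\tfrac{p}{p+m}+m\cdot\tfrac1{p+m}=1$, which is forced by the homogeneity $e_p(\lambda u)=\lambda^{p+m}e_p(u)$.

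Writing $T=(dd^c|z|^2)^{n-m}$, the argument rests on two reductions. The first turns the mixed wedge into a geometric mean of diagonal ones. The bilinear form $(a,b)\mapsto\int_\Omega(-u_0)^p\,dd^ca\wedge dd^cb\wedge R$ is symmetric and positive semidefinite for every closed positive current $R=(dd^cw_1)\wedge\cdots\wedge(dd^cw_{m-2})\wedge T$, so the Cauchy--Schwarz inequality applies; iterating it over the $m$ factors yields
\[
\int_\Omega(-u_0)^p\,dd^cu_1\wedge\cdots\wedge dd^cu_m\wedge T\le\prod_{j=1}^m\left(\int_\Omega(-u_0)^p\,(dd^cu_j)^m\wedge T\right)^{1/m}.
\]
The second reduction, which is the heart of the matter, is the two-function estimate
\[
\int_\Omega(-u_0)^p\,(dd^cv)^m\wedge T\le C\,e_p(u_0)^{p/(p+m)}\,e_p(v)^{m/(p+m)}\qquad(v\in\{u_1,\dots,u_m\}).
\]
Granting it, raising to the power $1/m$ and multiplying over $j$ recombines the exponents into $e_p(u_0)^{p/(p+m)}\prod_j e_p(u_j)^{1/(p+m)}$, which is the assertion.

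To prove the two-function estimate I would run an integration-by-parts ladder on
\[
I_k=\int_\Omega(-u_0)^p\,(dd^cu_0)^{k}\wedge(dd^cv)^{m-k}\wedge T,\qquad k=0,\dots,m,
\]
with $I_m=e_p(u_0)$ and $I_0$ the quantity to be bounded. Moving one $d^c$ off the lowest power of $u_0$ converts a Hessian integral into a gradient integral $\int_\Omega(-u_0)^{p-1}\,du_0\wedge d^cv\wedge(\text{closed positive current})$, to which Cauchy--Schwarz (again for a positive semidefinite gradient pairing) applies, producing a recursive inequality of the shape $I_k\le C\,I_{k+1}^{\theta_k}\,\big(\text{energy of }v\big)^{1-\theta_k}$. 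Composing these finitely many steps and bookkeeping the exponents $\theta_k$ against the scaling weights yields the powers $p/(p+m)$ and $m/(p+m)$, with a final constant depending only on $p,m,n$ and, through the fixed background $T$, on $\Omega$.

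I expect the main obstacle to be exactly this ladder: organising the integration-by-parts/Cauchy--Schwarz recursion so that the exponents collapse precisely to $p/(p+m)$ and $m/(p+m)$, while keeping every intermediate gradient and Hessian integral finite and every boundary term zero. Two further technical points demand care: the positive semidefiniteness of the weighted pairings when $p$ is non-integer, so that $(-u_0)^p$ is merely H\"older and the quadratic forms must first be justified for smooth strictly $m$-subharmonic functions and then stabilised under approximation; and the final monotone limit from $\mathcal E^0_m(\Omega)$ to $\mathcal E_{p,m}(\Omega)$, where one invokes convergence of the mixed Hessian measures together with the uniform energy bounds built into the definition of the class.
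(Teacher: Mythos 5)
A preliminary remark on provenance: the paper itself gives no proof of Theorem~\ref{thm_holder}; it is imported from Lu~\cite{L,L3} and Nguyen~\cite{thien}, so your proposal has to be measured against the arguments there. Your outer structure (prove the estimate on $\mathcal{E}^0_m(\Omega)$, then pass to decreasing limits) and your integration-by-parts ``ladder'' are indeed the skeleton of those proofs, but your first reduction contains a genuine gap. The pairing
\[
B(a,b)=\int_\Omega(-u_0)^p\,dd^ca\wedge dd^cb\wedge R,\qquad R=dd^cw_1\wedge\cdots\wedge dd^cw_{m-2}\wedge(dd^c|z|^2)^{n-m},
\]
is \emph{not} positive semidefinite on any vector space containing the functions you need, so Cauchy--Schwarz is unavailable. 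The standard proof of Cauchy--Schwarz requires $B(a+tb,a+tb)\geq 0$ for all real $t$, hence positivity of the diagonal on \emph{differences} of $m$-subharmonic functions; but for $a=|z_1|^2-|z_2|^2$ one has $dd^ca=c\,\bigl(i\,dz_1\wedge d\bar z_1-i\,dz_2\wedge d\bar z_2\bigr)$ with $c>0$, so that
\[
(dd^ca)^2\wedge(dd^c|z|^2)^{n-2}=-2c^2\,\bigl(i\,dz_1\wedge d\bar z_1\bigr)\wedge\bigl(i\,dz_2\wedge d\bar z_2\bigr)\wedge(dd^c|z|^2)^{n-2}
\]
is a \emph{negative} multiple of the volume form, whence $B(a,a)<0$. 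This is not a removable technicality: for positive $(1,1)$-forms the pointwise inequality goes the \emph{opposite} way (Alexandrov's inequality for mixed discriminants gives $\alpha\wedge\beta\wedge R\geq(\alpha^2\wedge R)^{1/2}(\beta^2\wedge R)^{1/2}$ pointwise), so no algebraic or pointwise argument can yield your mixed-to-diagonal inequality. Whatever validity that inequality has is a global, boundary-sensitive phenomenon whose proof already requires the full integration-by-parts machinery; in other words, your ``first reduction'' does not reduce the problem at all.

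The proofs in~\cite{L,L3,thien} avoid this step entirely: they run your step-three ladder \emph{directly on the mixed integral}, via a two-function lemma of the form
\[
\int_\Omega(-u)^p\,dd^cv\wedge T\leq C_p\left(\int_\Omega(-u)^p\,dd^cu\wedge T\right)^{\frac{p}{p+1}}\left(\int_\Omega(-v)^p\,dd^cv\wedge T\right)^{\frac{1}{p+1}},
\]
valid for \emph{every} closed positive current $T$ of the appropriate bidegree. Taking $T$ to absorb the remaining factors, one replaces $dd^cu_j$ by $dd^cu_0$ one factor at a time, and solving the resulting recursive system of inequalities (apply the lemma, swap the roles of $u$ and $v$ in the crossed term it produces, and solve for the mixed quantity) is exactly what generates the exponents $p/(p+m)$ and $1/(p+m)$; no diagonalization step is needed. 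A second warning: your ladder as sketched silently assumes $p\geq 1$. Integration by parts produces $dd^c(-u_0)^p=-p(-u_0)^{p-1}dd^cu_0+p(p-1)(-u_0)^{p-2}du_0\wedge d^cu_0$, and the gradient term can be discarded by sign only when $p\geq1$; for $0<p<1$, which the theorem covers, the sign flips and a genuinely different argument is required. That sign problem --- not the H\"older regularity of $(-u_0)^p$ that you flag --- is the delicate point in the cited proofs.
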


\section{The main result}\label{ms}

In this section we shall prove the regularity of the solution, $\UU$, to the following Dirichlet problem for the complex Hessian equation in a bounded $m$-hyperconvex domain $\Omega$,
\[
\HH(\UU)=fdV_{2n}.
\]
It was proved by Dinew and Ko\l odziej~\cite{DK}, that if $\beta>\frac nm$, then $\UU$ is continuous $m$-subharmonic functions, so $\UU\in \mathcal E_{q,m}(\Omega)$, for all $q>0$. Therefore, we shall focus on the case $\beta\leq\frac nm$. Note that since $\HH(\UU)$ is bounded and if $\UU\in \mathcal E_{p,m}(\Omega)$, then $\UU\in \mathcal E_{q,m}(\Omega)$, for $q\leq p$.

\begin{definition}
Let $\beta>1$. We say that $m$-subharmonic function $u$ belongs to the class $\mathcal M_{\beta}$ if $\HH(u)=fdV_{2n}$ and $f\in L^{\beta}(\Omega)$.
\end{definition}

\begin{lemma}\label{l1}
Let $\Omega$ be bounded $m$-hyperconvex domain. If $u\in \mathcal E_{p,m}(\Omega)\cap \mathcal M_{\beta}$, $p\geq 0$, then $u\in \mathcal E_{q,m}(\Omega)$, for
\[
\begin{cases}
0<q<\frac {(\beta-1)n(m+p)}{\beta(n-m)}, & \text{ when }\beta> \frac {n(m+p)}{m(n+p)};\\[2mm]
0<q\leq p,  & \text{ when } \beta\leq \frac {n(m+p)}{m(n+p)}.
\end{cases}
\]
Moreover, there exists a constant $D>0$, which does not depend on $u$, such that
\[
e_{q,m}(u)\leq D \|f\|_{\beta}^{\frac {m+q}{m}}.
\]
\end{lemma}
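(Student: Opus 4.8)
The plan is to test the energy $e_{q,m}(u)$ against the measure $\HH(u)=f\,dV_{2n}$ and strip off the density by Hölder's inequality, after which a self-improving (absorption) argument removes the remaining energy factor. Writing $\beta'=\frac{\beta}{\beta-1}$ for the conjugate exponent, the point of departure is
\[
e_{q,m}(u)=\int_\Omega(-u)^q\,\HH(u)=\int_\Omega(-u)^q f\,dV_{2n}\le \|f\|_\beta\Big(\int_\Omega(-u)^{q\beta'}\,dV_{2n}\Big)^{1/\beta'}.
\]
The decisive ingredient I would invoke is the volume (Sobolev-type) integrability of the energy classes: for $v\in\mathcal{E}_{r,m}(\Omega)$ and $0\le s<\frac{n(r+m)}{n-m}$ there is a constant $A=A(s,r,m,n,\Omega)$ with $\int_\Omega(-v)^s\,dV_{2n}\le A\,e_{r,m}(v)^{s/(r+m)}$. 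This is the $m$-subharmonic analogue of the integrability of Cegrell's energy classes; the homogeneity $v\mapsto\lambda v$ forces the exponent $s/(r+m)$, and the critical value $\frac{n(r+m)}{n-m}$ is precisely what generates the thresholds in the statement.

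In the first case $\beta>\frac{n(m+p)}{m(n+p)}$, which is equivalent to $p\beta'<\frac{n(p+m)}{n-m}$ and to $p<Q_1:=\frac{(\beta-1)n(m+p)}{\beta(n-m)}$, I would first run the argument at level $q=p$. Applying the integrability estimate with $r=p$, $s=p\beta'$ gives
\[
e_{p,m}(u)\le \|f\|_\beta\,A^{1/\beta'}\,e_{p,m}(u)^{p/(p+m)}.
\]
Because $u\in\mathcal{E}_{p,m}(\Omega)$ ensures $e_{p,m}(u)<\infty$ and the exponent $p/(p+m)<1$, I can absorb the energy factor on the left and conclude $e_{p,m}(u)\le D_0\|f\|_\beta^{(p+m)/m}$. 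Inserting this a priori bound back into the Hölder estimate for any $q<Q_1$ (so that $q\beta'<\frac{n(p+m)}{n-m}$) and using the integrability estimate once more with $r=p$, $s=q\beta'$, I obtain
\[
e_{q,m}(u)\le \|f\|_\beta\,A^{1/\beta'}\,e_{p,m}(u)^{q/(p+m)}\le D\,\|f\|_\beta^{(m+q)/m}.
\]
Finiteness of $e_{q,m}(u)$ together with $u\in\mathcal{F}_m(\Omega)$ then yields $u\in\mathcal{E}_{q,m}(\Omega)$ by the usual characterisation of the energy classes.

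In the second case $\beta\le\frac{n(m+p)}{m(n+p)}$ there is no room to improve past $p$, and membership $u\in\mathcal{E}_{q,m}(\Omega)$ for $0<q\le p$ is immediate from the inclusion $\mathcal{E}_{p,m}(\Omega)\subset\mathcal{E}_{q,m}(\Omega)$. For the quantitative bound I would run the absorption argument directly at level $q$: since $q\le p$ we already know $u\in\mathcal{E}_{q,m}(\Omega)$, so the integrability estimate applies with $r=q$, $s=q\beta'$ as soon as $q\beta'<\frac{n(q+m)}{n-m}$, that is $q<\frac{(\beta-1)nm}{n-\beta m}$, and exactly the same rearrangement produces $e_{q,m}(u)\le D\|f\|_\beta^{(m+q)/m}$.

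The step I expect to be most delicate is the sharp integrability estimate together with the absorption it feeds: rearranging $X\le C\,X^{\theta}$ with $\theta<1$ into $X\le C^{1/(1-\theta)}$ is only legitimate because the hypothesis $u\in\mathcal{E}_{p,m}(\Omega)$ supplies the a priori finiteness of the energy, and the whole scheme hinges on having the volume estimate available \emph{up to} the critical exponent $\frac{n(r+m)}{n-m}$ — this is where the borderline behaviour, and hence the exact thresholds separating the two cases, is decided. A secondary point to verify is the elementary but essential algebra showing that $p\beta'<\frac{n(p+m)}{n-m}$, $p<Q_1$, and $\beta>\frac{n(m+p)}{m(n+p)}$ are one and the same condition, so that the two cases dovetail at $q=p$.
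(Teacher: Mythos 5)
Your proposal is correct in its essentials and rests on exactly the same two pillars as the paper's proof: H\"older's inequality with the conjugate pair $(\beta,\beta')$ to strip off the density, and the Sobolev-type inequality of \cite[Theorem 5.4]{AC-P} combined with an absorption step ($X\le CX^{\theta}$ with $\theta<1$) to produce $e_{q,m}(u)\le D\|f\|_{\beta}^{(m+q)/m}$; your algebra identifying $\beta>\frac{n(m+p)}{m(n+p)}$ with $p\beta'<\frac{n(p+m)}{n-m}$ and with $p<Q_1$ is also what the paper relies on. The genuine difference is structural: you run everything on $u$ itself and then conclude $u\in\mathcal E_{q,m}(\Omega)$ from the finiteness of $e_{q,m}(u)$ via ``the usual characterisation of the energy classes''. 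That implication ($u\in\mathcal F_m(\Omega)$ and $e_{q,m}(u)<\infty$ imply $u\in\mathcal E_{q,m}(\Omega)$) is true in the present situation, but it is a nontrivial theorem --- it hinges on $\operatorname{H}_m(u)$ putting no mass on $m$-polar sets --- and it is not among the tools the paper sets up, so it would need a precise citation (results of this type exist in the literature around the class $\mathcal N_m$, cf.~\cite{thien2}). The paper avoids invoking it altogether: it takes $u_j\in\mathcal E^0_m(\Omega)$ solving $\operatorname{H}_m(u_j)=\min(f,j)\,dV_{2n}$, so that $u_j\searrow u$, runs the same H\"older--Sobolev estimate on the $u_j$ to get $\sup_j e_{q,m}(u_j)<\infty$, and then membership in $\mathcal E_{q,m}(\Omega)$ is literally the definition of the class; in exchange it needs uniform control of the energies of the approximants, which your version does not. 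A smaller difference: you absorb once at level $p$ and propagate to all $q<Q_1$ through the level-$p$ Sobolev estimate, whereas the paper absorbs at level $q$ itself (legitimate within case 1, since there $q<Q_1<\frac{(\beta-1)nm}{n-\beta m}$); both routes are valid, and yours has the advantage of using the Sobolev inequality only at level $p$.

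One caveat on completeness: in the case $\beta\le\frac{n(m+p)}{m(n+p)}$ your quantitative estimate is obtained only for $q<\frac{(\beta-1)nm}{n-\beta m}$, which in that case is at most $p$, so the ``Moreover'' bound is not established on the whole range $0<q\le p$. You are in good company --- the paper's own proof is silent on this point in the second case --- but you should be aware that your argument leaves that range uncovered.
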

\begin{proof}
Let $u_j$ be an approximating sequence defined by  $\HH(u_j)=\min(j,f)dV_{2n}=f_jdV_{2n}$, $u_j\in \mathcal E^0_m(\Omega)\cap L^{\infty}(\Omega)$, and $u_j\searrow u$,
as $j\to\infty$. First assume that $\beta> \frac {n(m+p)}{m(n+p)}$, i.e. $\frac {(\beta-1)n(m+p)}{\beta(n-m)}> p$. For $\frac {1}{\alpha}+\frac {1}{\beta}=1$, we get
\begin{multline}\label{eq1}
\int_{\Omega}(-u_j)^q\HH(u_j)=\int_{\Omega}(-u_j)^qf_jdV_{2n}\\ \leq \left(\int_{\Omega}(-u_j)^{q\alpha}dV_{2n}\right)^{\frac {1}{\alpha}}\left(\int_{\Omega}f_j^{\beta}dV_{2n}\right)^{\frac {1}{\beta}},
\end{multline}
which is finite by the Sobolev type inequality (\cite[Theorem 5.4]{AC-P}) if $\alpha q<\frac {n(m+p)}{n-m}$. Hence,
\[
0<q<\frac {(\beta-1)n(m+p)}{\beta(n-m)}.
\]
By letting $j\to\infty$ we get desired result. Again, by the Sobolev type inequality (\cite[Theorem 5.4]{AC-P}) we have
\[
\|u\|_{\alpha q}\leq Ce_{q,m}(u)^{\frac {1}{m+q}},
\]
where the constant $C$ does not depend on $u$. Therefore, (\ref{eq1}) yields
\[
e_q(u)\leq C^{q}e_q(u)^{\frac {q}{m+q}}\|f\|_{\beta},
\]
and we arrive at
\[
e_{q,m}(u)\leq D \|f\|_{\beta}^{\frac {m+q}{q}}.
\]

 Now assume that $\beta\leq  \frac {n(m+p)}{m(n+p)}$, i.e. $\frac {(\beta-1)n(m+p)}{\beta(n-m)}\leq  p$. By repeating the above argument we get $u\in \mathcal E_{q,m}(\Omega)$ for $0<q\leq \frac {(\beta-1)n(m+p)}{\beta(n-m)}$, and for $p=q$. Now we can use a standard argument to show that actually $u\in \mathcal E_{q,m}(\Omega)$ when $0<q\leq p$.

\end{proof}

By Lemma~\ref{l1} we see that the only interested case for us is when $\beta\in \left(\frac {n(m+p)}{m(n+p)},\frac nm\right]$. Under this assumption we will in the next lemma improve Lemma~\ref{l1} further.

\begin{lemma}\label{l2}
Let $\Omega$ be bounded $m$-hyperconvex domain, and assume that $\beta\in \left(\frac {n(m+p)}{m(n+p)},\frac nm\right]$. If $u\in \mathcal E_{p,m}(\Omega)\cap \mathcal M_{\beta}$, then $u\in \mathcal E_{q,m}(\Omega)$, for
\[
\begin{cases}
0<q<\frac {(\beta-1)nm}{n-\beta m}, & \text{ when }\beta<\frac nm;\\[2mm]
q>0,  & \text{ when } \beta=\frac nm.
\end{cases}
\]
\end{lemma}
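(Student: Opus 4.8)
The plan is to bootstrap Lemma~\ref{l1}. Write
\[
\Phi(t)=\frac{(\beta-1)n(m+t)}{\beta(n-m)}
\]
for the exponent threshold produced by Lemma~\ref{l1}, so that $u\in\mathcal E_{t,m}(\Omega)\cap\mathcal M_{\beta}$ together with $\beta>\frac{n(m+t)}{m(n+t)}$ forces $u\in\mathcal E_{q,m}(\Omega)$ for every $0<q<\Phi(t)$. The function $\Phi$ is affine and strictly increasing in $t$ (since $\beta>1$ and $n>m$), with slope $s=\frac{(\beta-1)n}{\beta(n-m)}$. A direct computation shows that the standing hypothesis $\beta>\frac{n(m+p)}{m(n+p)}$ is equivalent to $p<p_\ast$, where $p_\ast=\frac{(\beta-1)nm}{n-\beta m}$ solves $\Phi(p_\ast)=p_\ast$; the same algebra shows $s<1$ precisely when $\beta<\frac nm$, while $\beta=\frac nm$ gives $s=1$ and $\Phi(t)=t+m$.

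First I would treat the case $\beta<\frac nm$. Here $\Phi$ is a contraction with attracting fixed point $p_\ast$, so $\Phi(t)>t$ for every $t<p_\ast$. Set $p_0=p$ and $p_{k+1}=\tfrac12\bigl(p_k+\Phi(p_k)\bigr)$. Since $p_0<p_\ast$ and the midpoint map $t\mapsto\tfrac12(t+\Phi(t))$ is again an increasing contraction with the same fixed point $p_\ast$, the sequence $(p_k)$ is strictly increasing and converges to $p_\ast$. An induction then shows $u\in\mathcal E_{p_k,m}(\Omega)$ for every $k$: the base case is the hypothesis, and if $u\in\mathcal E_{p_k,m}(\Omega)$ with $p_k<p_\ast$ then $\beta>\frac{n(m+p_k)}{m(n+p_k)}$, so Lemma~\ref{l1} applies and yields $u\in\mathcal E_{q,m}(\Omega)$ for all $q<\Phi(p_k)$; as $p_{k+1}<\Phi(p_k)$, in particular $u\in\mathcal E_{p_{k+1},m}(\Omega)$. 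Given any $0<q<p_\ast$, we have $p_k>q$ for $k$ large, whence $u\in\mathcal E_{q,m}(\Omega)$ because $\mathcal E_{p_k,m}(\Omega)\subset\mathcal E_{q,m}(\Omega)$.

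For the endpoint case $\beta=\frac nm$ the same scheme applies verbatim, the only change being that $\Phi(t)=t+m$ has no finite fixed point. Taking $p_{k+1}=p_k+\tfrac m2$ produces an increasing sequence with $p_k\to\infty$ and $u\in\mathcal E_{p_k,m}(\Omega)$ for every $k$, so $u\in\mathcal E_{q,m}(\Omega)$ for all $q>0$.

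The computations are elementary, so I do not anticipate a genuine obstacle; the one point requiring care is the self-improving step, namely checking that each new exponent $p_{k+1}$ stays strictly below the admissible threshold $\Phi(p_k)$ while still advancing toward $p_\ast$, so that the iteration neither leaves the range of Lemma~\ref{l1} nor stalls short of the claimed limit. Using the midpoint map makes both properties transparent, since it is simultaneously dominated by $\Phi$ and a contraction onto $p_\ast$.
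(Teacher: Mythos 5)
Your proof is correct and is essentially the paper's own argument: both bootstrap Lemma~\ref{l1}, driving the admissible exponent to the fixed point $p_{\ast}=\frac{(\beta-1)nm}{n-\beta m}$ of $\Phi$ when $\beta<\frac nm$, and to $\infty$ when $\beta=\frac nm$ (where $\Phi(t)=t+m$). Your midpoint iterate $p_{k+1}=\tfrac12\bigl(p_k+\Phi(p_k)\bigr)$ is in fact a slightly cleaner bookkeeping device than the paper's sequence $p_N=\Phi(p_{N-1})$, since Lemma~\ref{l1} only gives membership for exponents strictly below $\Phi(p_{N-1})$, a point the paper passes over informally.
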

\begin{proof} We base this proof on the following iterating procedure: When $\beta\in \left(\frac {n(m+p)}{m(n+p)},\frac nm\right]$, then if $u\in \mathcal E_{p,m}(\Omega)$, then $u\in \mathcal E_{q,m}(\Omega)$, for $q<\frac {(\beta-1)n(m+p)}{\beta (n-m)}=p'(>p)$. Moreover, if  $\beta>\frac {n(m+p)}{m(n+p)}$,  then $\beta>\frac {n(m+p')}{m(n+p')}$, and we can continue repeating the  procedure. Again, if $u\in \mathcal E_{p',m}(\Omega)$, then $u\in \mathcal E_{q',m}(\Omega)$, for $q'<\frac {(\beta-1)n(m+p')}{\beta (n-m)}$. In this way, in each step, the new $q'$ is slightly bigger than $p'$. Thus, we obtain the following sequence
\[
p_N=\alpha(m+p_{N-1}), \ \ \text {where} \ \ \alpha=\frac {(\beta-1)n}{\beta(n-m)}.
\]
If $\alpha=1$, then $\beta=\frac nm$ and $p_N=m+p_{N-1}$. Hence, $p_N\to \infty$, so $u\in \mathcal E_{p,m}(\Omega)$ for all $p>0$. Now assume that $\alpha<1$. We have that the above sequence is convergent, since it is increasing and bounded from above by $p_{\infty}=\frac {(\beta-1)nm}{n-\beta m}$. Therefore, we get $p_N\to p_{\infty}=\frac {(\beta-1)nm}{n-\beta m}$, as $N\to \infty$. Hence, $u\in \mathcal E_{q,m}(\Omega)$ for $q<p_{\infty}$.
\end{proof}
\begin{remark}
Note that the exponent $q$ obtained in Lemma~\ref{l2} is better than the exponent obtained in Lemma~\ref{l1}, since
\[
\text{if }  \beta>\frac {n(m+p)}{m(n+p)},  \text { then } \  \frac {(\beta-1)n(m+p)}{\beta(n-m)}<\frac {(\beta-1)nm}{n-\beta m}.
\]
\end{remark}

\begin{theorem}\label{mt}  Let $n\geq 2$, $1\leq m<n$, and assume that $\Omega\subset \mathbb C^n$ is a $m$-hyperconvex domain. For $1<\beta\leq\frac{n}{m}$, let $u\in \mathcal M_{\beta}$. Then the following holds:
\begin{enumerate}\itemsep2mm
\item if $\beta=\frac{n}{m}$, then $u\in \mathcal E_{p,m}(\Omega)$ for all $p>0$. Furthermore, $u\in L^q(\Omega)$, for all $q>0$;

\item if $\beta<\frac{n}{m}$, then $u\in \mathcal E_{p,m}(\Omega)$ for $p<\frac{(\beta-1)nm}{n-\beta m}$. Furthermore, $u\in L^q(\Omega)$, for all
\[
0<q<\frac{\beta nm}{n-\beta m}\, .
\]
\end{enumerate}
\end{theorem}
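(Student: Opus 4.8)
The plan is to assemble the statement from the two preceding lemmas, using the base case $p=0$ to launch the iteration, and then to read off the integrability from the Sobolev-type inequality.

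First I would check that $u$ lies in $\mathcal F_{m}(\Omega)=\mathcal E_{0,m}(\Omega)$, which is what is needed to start the machinery. Since $\Omega$ is bounded and $\beta>1$, Hölder's inequality gives $f\in L^{1}(\Omega)$, so that the total Hessian mass
\[
\int_{\Omega}\HH(u)=\int_{\Omega}f\,dV_{2n}\leq \|f\|_{\beta}\,|\Omega|^{1/\alpha}
\]
is finite, where $\tfrac1\alpha+\tfrac1\beta=1$. As $\HH(u)$ is well-defined, finite total mass places $u$ in $\mathcal F_{m}(\Omega)$, so that $u\in\mathcal E_{0,m}(\Omega)\cap\mathcal M_{\beta}$.

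Next, I would feed this into Lemma~\ref{l2}. With $p=0$ the threshold $\frac{n(m+p)}{m(n+p)}$ equals $1$, so the hypothesis $1<\beta\leq \frac nm$ is exactly $\beta\in\left(\frac{n(m+0)}{m(n+0)},\frac nm\right]$; thus Lemma~\ref{l2} applies at $p=0$. It yields directly: if $\beta=\frac nm$, then $u\in\mathcal E_{p,m}(\Omega)$ for every $p>0$; and if $\beta<\frac nm$, then $u\in\mathcal E_{p,m}(\Omega)$ for every $p<p_{\infty}=\frac{(\beta-1)nm}{n-\beta m}$. This settles the energy-class part of both items. Finally, for the $L^{q}$-integrability I would invoke the Sobolev-type inequality \cite[Theorem~5.4]{AC-P}, which (as already used in the proof of Lemma~\ref{l1}) shows that $v\in\mathcal E_{p,m}(\Omega)$ implies $v\in L^{s}(\Omega)$ for every $s<\frac{n(m+p)}{n-m}$. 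A short computation gives $\frac{n(m+p_{\infty})}{n-m}=\frac{\beta nm}{n-\beta m}$, so letting $p\nearrow p_{\infty}$ yields $u\in L^{q}(\Omega)$ for all $0<q<\frac{\beta nm}{n-\beta m}$ in the case $\beta<\frac nm$; in the case $\beta=\frac nm$ the exponent $\frac{n(m+p)}{n-m}$ tends to $\infty$ as $p\to\infty$, giving $u\in L^{q}(\Omega)$ for every $q>0$.

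The step I expect to require the most care is the first one: verifying that membership in $\mathcal M_{\beta}$ genuinely puts $u$ in the Cegrell class $\mathcal F_{m}(\Omega)$, so that $\HH(u)$ is the expected measure and the energy estimates of Lemma~\ref{l1} together with the iteration of Lemma~\ref{l2} are legitimately available. Once this is secured, the remaining two steps are essentially bookkeeping with the exponents, the only arithmetic being the two identities $p_{\infty}=\frac{\alpha m}{1-\alpha}=\frac{(\beta-1)nm}{n-\beta m}$ and $\frac{n(m+p_{\infty})}{n-m}=\frac{\beta nm}{n-\beta m}$.
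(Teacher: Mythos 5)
Your proof is correct and follows essentially the same route as the paper's: establish $u\in\mathcal E_{0,m}(\Omega)$ from finiteness of the total Hessian mass, run the iteration encapsulated in Lemmas~\ref{l1} and~\ref{l2}, and read off the $L^q$-integrability from the Sobolev-type inequality of \cite[Theorem 5.4]{AC-P}. The only cosmetic difference is that you invoke Lemma~\ref{l2} directly at $p=0$ (where the threshold $\frac{n(m+p)}{m(n+p)}$ equals $1$), whereas the paper first applies Lemma~\ref{l1} to reach a positive exponent $p_0=\frac{(\beta-1)nm}{\beta(n-m)}$ and then feeds that into Lemma~\ref{l2}; both amount to the same iteration and give identical conclusions.
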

\begin{proof}
Let $u\in \mathcal M_{\beta}$, then $u\in \mathcal E_{0,m}$, since $\HH(u)$ is bounded. Now we can use Lemma~\ref{l1} to get $u\in \mathcal E_{p,m}(\Omega)$, for $p<\frac {(\beta-1)nm}{\beta(n-m)}=p_0$. Observe that $\beta>\frac {n(m+p_0)}{m(n+p_0)}$. Now by Lemma~\ref{l2} we get the thesis. The integrability condition for $u$ follows from~\cite{AC-P}.
\end{proof}

In Example~\ref{examp1} we provide an example that shows that our result is sharp, and in Example~\ref{examp2} we show that in the case $\beta=\frac nm$, the solution can be unbounded.

\begin{example}\label{examp1}
Let $u_{\alpha}(z)=1-|z|^{\alpha}$, for $0>\alpha>2-\frac {2n}{m}$, be a function defined in the unit ball $B(0,1)\subset \mathbb C^n$, $n\geq 2$. Then $u_{\alpha}$ is $m$-subharmonic, and $\HH(u_{\alpha})=cf_{\alpha}(z)dV_{2n}$. Here, $c$ is a constant, and the density is given by $f_{\alpha}(z)=|z|^{(\alpha-2)m}$. For $\beta\in (1,\frac nm)$, the function $f_{\alpha}$ is in $L^{\beta}(B(0,1))$  if, and only if, $\alpha>2-\frac {2n}{m\beta}$. Finally, $u_{\alpha}\in \mathcal E_{p,m}(B(0,1))$ if, and only if, $p<\frac {-2n+(2-\alpha)m}{\alpha}$. Then, choose $\epsilon>0$ small enough, and let $\alpha(\epsilon)=2-\frac {2n}{m\beta}+\frac{2\epsilon}{m\beta}$. Then $u_{\alpha(\epsilon)}\notin \mathcal E_{p(\epsilon),m}(B(0,1))$ for
\[
p(\epsilon)=\frac {mn(\beta-1)+m\epsilon}{n-m\beta-\epsilon}.
\]
\hfill{$\Box$}
\end{example}

\begin{example}\label{examp2}
Let $0<\alpha<\frac {n-1}{n}$, and define  $u_{\alpha}(z)=(\ln 2)^{\alpha}-\left(-\ln |z|\right)^{\alpha}$. Then $u_{\alpha}$ is a unbounded $m$-subharmonic function defined in the ball $B(0,\frac 12)\subset \mathbb C^n$, $n\geq 2$. We have,
\[
\HH(u_{\alpha})=cf_{\alpha}(z)dV_{2n},
\]
where $c$ is a constant, and the density is given by
\[
f_{\alpha}(z)=|z|^{-2m}\left(-\ln|z|\right)^{\alpha m -m -1}\left((2n-2m)(-\ln |z|)+m(1-\alpha)\right).
\]
Finally, note that $f_{\alpha}\in L^{\frac nm}(B(0,\frac 12))$, since
\[
\int_{B(0,\frac 12)}(f_{\alpha})^{\frac nm}dV_{2n}\leq C\int_0^{\frac 12}\frac {dt}{t(-\ln t)^{n(1-\alpha)}}<\infty.
\]\hfill{$\Box$}
\end{example}

\section{Stability}

 Before proving our stability result, Theorem~\ref{stab}, let us recall some necessary tools and results. For $u,v\in \mathcal E_{p,m}(\Omega)$, $p>0$,  define
\[
\Jp(u,v)=\left(\int_{\Omega}|u-v|^p(\HH(u)+\HH(v))\right)^\frac {1}{p+m},
\]
then it follows from~\cite{ACquasi} that $(\mathcal E_{p,m}(\Omega),\Jp)$ is a complete quasimetric space. We shall as well need the following comparison principle from~\cite{thien2}.

\begin{theorem}\label{cp}
Let $\Omega$ be a bounded $m$-hyperconvex domain, let $u,v\in \mathcal E_{0,m}(\Omega)$($p=0$), $w\in \mathcal E^0_m(\Omega)$, then
\[
\int_{\{u<v\}}(v-u)^m\HH(w)\leq m!\|w\|_{\infty}^{m-1}\int_{\{u<v\}}(-w)(\HH(u)-\HH(v)).
\]
\end{theorem}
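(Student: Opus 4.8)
The plan is to recognize this as the $m$-Hessian analogue of Xing's comparison principle for the complex Monge--Amp\`ere operator and to prove it by iterated integration by parts. Throughout write $\beta=dd^c|z|^2$, so that $\HH(\cdot)=(dd^c\,\cdot\,)^m\wedge\beta^{n-m}$. First I would reduce to bounded data. Since $w\in\mathcal E^0_m(\Omega)$ it is bounded, and the maximum principle gives $w\le 0$, so $-w=|w|$. Since $u,v\in\mathcal E_{0,m}(\Omega)=\mathcal F_m(\Omega)$ there are decreasing sequences $u_j,v_j\in\mathcal E^0_m(\Omega)$ with $u_j\searrow u$ and $v_j\searrow v$; proving the inequality for the bounded pair $(u_j,v_j)$ and letting $j\to\infty$, by weak continuity of the mixed Hessian measures along decreasing sequences together with monotone convergence, reduces everything to $u,v\in\mathcal E^0_m(\Omega)$. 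To avoid the rough free boundary $\{u=v\}$ I would carry out the estimate on the open sets $\{u<v-\varepsilon\}$ (regularizing $u$ by $\max(u,v-\varepsilon)$ when convenient) and only afterwards let $\varepsilon\downarrow 0$.

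The heart of the argument is an integration-by-parts engine. Put $\rho:=(v-u)_+=v-\min(u,v)\ge 0$; this is a difference of bounded $m$-subharmonic functions, it vanishes on $\partial\Omega$ and on $\{u\ge v\}$, and $\rho=v-u$ on $\{u<v\}$, so the left-hand side equals $\int_\Omega\rho^m(dd^cw)^m\wedge\beta^{n-m}$. The basic move transfers one $dd^c$ off $w$,
\[
\int_\Omega\rho^{k}\,dd^cw\wedge R=\int_\Omega w\,dd^c(\rho^{k})\wedge R,\qquad dd^c(\rho^{k})=k\rho^{k-1}dd^c\rho+k(k-1)\rho^{k-2}\,d\rho\wedge d^c\rho,
\]
valid for closed positive $R$ because $w=0$ on $\partial\Omega$ and $\rho$ vanishes on the free boundary. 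Since $w\le 0$, the nonnegative gradient term $d\rho\wedge d^c\rho$ may be dropped with the favorable sign, while on $\{u<v\}$ one has $dd^c\rho=dd^c(v-u)$. Performing $m$ successive reductions $\rho^m\to\rho^{m-1}\to\cdots\to\rho^0$ produces the constant $m!=\prod_{k=1}^m k$ (one factor $k$ at each step), converts the block $(dd^cw)^m$ into mixed wedges of $dd^cu$ and $dd^cv$, and leaves a single undifferentiated weight $-w$ together with $m-1$ factors of $w$ that are estimated by $\|w\|_\infty$; assembling the mixed terms through the telescoping identity
\[
\HH(u)-\HH(v)=dd^c(u-v)\wedge\left(\sum_{j=0}^{m-1}(dd^cu)^j\wedge(dd^cv)^{m-1-j}\right)\wedge\beta^{n-m}
\]
reproduces exactly the right-hand side $m!\,\|w\|_\infty^{m-1}\int_{\{u<v\}}(-w)\big(\HH(u)-\HH(v)\big)$.

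The main obstacle is making this engine rigorous and, above all, organizing the reductions so that $\|w\|_\infty$ is always extracted against a \emph{positive} measure and the telescoping reproduces precisely $\HH(u)-\HH(v)$ with constant $m!$ (rather than a larger mixed expression). Two technical points require care. First, the integration by parts takes place for currents of order zero on the non-smooth set $\{u<v\}$; I would justify each step through the integration-by-parts theorem for bounded $m$-subharmonic functions (cf.~\cite{Blocki,L}), working on $\{u<v-\varepsilon\}$ with the regularization above and removing it by dominated/monotone convergence and weak continuity of the mixed Hessian measures. Second, the delicate sub-case is the very last transfer (and the whole case $m=1$), where $\rho^{k-1}$ no longer vanishes on $\{u=v\}$: there one cannot discard the contact-set contribution for that reason, but must instead use that the contact part of $dd^c\rho$ is a positive current, which together with $w\le 0$ forces the contact contribution to have the favorable sign. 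Once the one-step inequality and its sign bookkeeping are in place, the remaining combinatorics is routine.
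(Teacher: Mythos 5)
The paper itself contains no proof of Theorem~\ref{cp}: it is imported verbatim from~\cite{thien2} (``We shall as well need the following comparison principle from~\cite{thien2}''), so your attempt can only be measured against the route taken in the literature, namely Xing's integration-by-parts proof for the Monge--Amp\`ere operator and its $m$-Hessian adaptation in~\cite{thien2}. Your skeleton is exactly that route, and most of its ingredients are right: reduce to bounded data, transfer $dd^c$ from $w$ onto powers of $\rho=(v-u)_+$, drop the terms $d\rho\wedge d^c\rho$ using $w\le 0$, collect one factor $k$ per step to produce $m!$, extract $\|w\|_\infty$ at all but one step, and treat the last transfer (where $d^c\rho$ no longer vanishes on the contact set) by a separate sign argument. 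One small correction: $\min(u,v)$ is not $m$-subharmonic, so $v-\min(u,v)$ does not exhibit $\rho$ as a difference of $m$-subharmonic functions; write $\rho=\max(u,v)-u$ instead, which is also what makes the plurifine locality $\mathbf 1_{\{u<v\}}\,dd^c\max(u,v)\wedge T=\mathbf 1_{\{u<v\}}\,dd^cv\wedge T$ available for your contact-set argument.

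The genuine gap is the step you yourself call the ``main obstacle'' and then declare routine: the telescoping does not come out of the iteration. After one transfer you face $k\int(-w)\rho^{k-1}\,dd^c(u-v)\wedge(dd^cw)^{k-1}\wedge T$ with $T$ closed and positive; since $dd^c(u-v)$ has no sign, the only way to keep iterating (the next gradient term can be discarded only against a \emph{positive} current) is to split off and throw away $k\int(-w)\rho^{k-1}dd^cv\wedge(dd^cw)^{k-1}\wedge T\ge0$, keeping only the $dd^cu$-part. Carrying this through all $m$ steps yields
\[
\int_{\{u<v\}}\rho^m\HH(w)\le m!\,\|w\|_\infty^{m-1}\int_{\{u<v\}}(-w)\,dd^c(u-v)\wedge(dd^cu)^{m-1}\wedge(dd^c|z|^2)^{n-m},
\]
i.e.\ only the extreme term of the telescoping sum, not $\HH(u)-\HH(v)$. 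To reach the stated right-hand side you must \emph{add} the mixed terms $\int_{\{u<v\}}(-w)\,dd^c(u-v)\wedge(dd^cu)^{j}\wedge(dd^cv)^{m-1-j}\wedge(dd^c|z|^2)^{n-m}$ for $0\le j\le m-2$, and their nonnegativity is not routine combinatorics: it is precisely a weighted comparison principle, $\int_{\{u<v\}}(-w)(dd^cu-dd^cv)\wedge S\ge0$ for every closed positive current $S$ of the appropriate bidegree, which has to be isolated and proved as a lemma (it does follow from your one-step engine with the contact-set sign analysis, applied with a single $dd^c$ and arbitrary positive $S$). Without that lemma the argument does not close, since the bound you actually obtain is not comparable to the claimed one for free. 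A second, lesser, gap: in the reduction to bounded data the sets $\{u_j<v_j\}$ are not monotone when both functions are approximated simultaneously; approximate one function at a time, so that $\{u_j<v\}\nearrow\{u<v\}$, or recast the inequality through $\max(u,v)$ before passing to the limit.
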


\begin{theorem}\label{stab} Let $n\geq 2$, $1\leq m<n$, $1<\beta\leq\frac{n}{m}$, and assume that $\Omega\subset \mathbb C^n$ is a $m$-hyperconvex domain. Furthermore, let that $f_j,f\in L^{\beta}(\Omega)$ be such that $\|f_j-f\|_{\beta}\to 0$, as $j\to \infty$. Then for $p<p_{\infty}$ it holds that $\Jp(\operatorname{U}_{f_j},\UU)\to 0$. Here,
\[
p_{\infty}=\left\{
             \begin{array}{ll}
               \frac{(\beta-1)nm}{n-\beta m}, & \text { if } \ \beta<\frac{n}{m}\\[2mm]
               \infty, & \text { if } \ \beta=\frac{n}{m}.
             \end{array}
           \right.
\]
Moreover, there exist a constant $C$ depending only on $\|f\|_{\beta}$, $\beta$, $m$, $p$, and a constant $\gamma$ depending only on $\beta$, $m$, $p$ such that
\[
\operatorname J_{p}(\operatorname{U}_{f_j},\UU)\leq C\|f_j-f\|_{\beta}^{\gamma}.
\]
The constant $\gamma$ can be taken as
\[
\begin{cases}
\gamma=\frac {p}{m(p+m)} & \text{, when } p\leq\frac {m(\beta-1)}{\beta};\\[2mm]
\gamma\in \left(0,\frac {\beta-1}{(p+m)\beta}\right)=\left(0,\frac {n-m}{n(m+p)}\right) & \text{, when } p>\frac {m(\beta-1)}{\beta}=\frac {m(n-m)}{n} \text{ and }\beta=\frac nm;\\[2mm]
\gamma\in \left(0,\frac {\beta m(n+p)-n(m+p)}{m(p+m)(\beta(n+m)-n)}\right)  & \text{, when } p>\frac {m(\beta-1)}{\beta} \text{ and }\beta<\frac nm.
\end{cases}
\]
\end{theorem}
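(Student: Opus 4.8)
The plan is to reduce the whole statement to two ingredients: a low‑order $L^m$‑stability estimate coming from the comparison principle in Theorem~\ref{cp}, and the a priori higher integrability of the solutions provided by Theorem~\ref{mt}; these are then combined through interpolation and Hölder's inequality. Throughout write $u_j=\operatorname{U}_{f_j}$, $u=\UU$, and let $\alpha=\frac{\beta}{\beta-1}$ be the conjugate exponent of $\beta$. Since $f_j\to f$ in $L^\beta(\Omega)$, the norms $\|f_j\|_\beta$ are bounded uniformly in $j$, so Lemma~\ref{l1} and the Sobolev type inequality of~\cite{AC-P} give uniform bounds (depending only on $\|f\|_\beta,\beta,m,n,\Omega$) for the energies $e_{q,m}(u_j)$ and for the norms $\|u_j\|_{L^s}$, for every $s$ below $s_\infty:=\frac{\beta nm}{n-\beta m}$ (every $s$ when $\beta=\frac nm$). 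Note also $u_j,u\in\mathcal E_{0,m}(\Omega)$, since $\HH(u_j)$ and $\HH(u)$ have finite mass.

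First I would record the identity $\Jp(u_j,u)^{p+m}=\int_\Omega|u_j-u|^p(f_j+f)\,dV_{2n}$, immediate from the definition of $\Jp$ and the equations $\HH(u_j)=f_jdV_{2n}$, $\HH(u)=fdV_{2n}$. Hölder's inequality with exponents $\alpha,\beta$ then gives
\[
\Jp(u_j,u)^{p+m}\leq \|u_j-u\|_{L^{p\alpha}}^{\,p}\,\|f_j+f\|_\beta,
\]
where $\|f_j+f\|_\beta\leq 2\|f\|_\beta+\|f_j-f\|_\beta$ is bounded. The task is thus to control $\|u_j-u\|_{L^{p\alpha}}$ by a power of $\|f_j-f\|_\beta$. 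The key low‑order step is to apply Theorem~\ref{cp} on $\{u_j<u\}$ and then on $\{u<u_j\}$, with a fixed $w\in\mathcal E^0_m(\Omega)$ chosen so that $\HH(w)=dV_{2n}$, and to add the two inequalities (using $\pm(f_j-f)\leq|f_j-f|$ and $-w\geq0$). This yields
\[
\int_\Omega|u_j-u|^m\,dV_{2n}\leq m!\,\|w\|_\infty^{m-1}\int_\Omega(-w)|f_j-f|\,dV_{2n}\leq C\,\|f_j-f\|_\beta,
\]
the last step being Hölder's inequality, whence $\|u_j-u\|_{L^m}\leq C\|f_j-f\|_\beta^{1/m}$. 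When $p\leq\frac{m(\beta-1)}{\beta}$, that is $p\alpha\leq m$, the inclusion $L^m(\Omega)\subset L^{p\alpha}(\Omega)$ on the bounded domain already gives $\|u_j-u\|_{L^{p\alpha}}\leq C\|u_j-u\|_{L^m}$, and substituting into the Hölder bound produces exactly $\gamma=\frac{p}{m(p+m)}$.

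For $p>\frac{m(\beta-1)}{\beta}$ one has $p\alpha>m$, and since $p<p_\infty$ a direct check gives $p\alpha<s_\infty$, so one may fix $s\in(p\alpha,s_\infty)$ and interpolate,
\[
\|u_j-u\|_{L^{p\alpha}}\leq \|u_j-u\|_{L^m}^{\theta}\,\|u_j-u\|_{L^s}^{1-\theta},\qquad \tfrac1{p\alpha}=\tfrac{\theta}{m}+\tfrac{1-\theta}{s}.
\]
Using the uniform $L^s$‑bound together with the $L^m$‑estimate gives $\|u_j-u\|_{L^{p\alpha}}^{\,p}\leq C\|f_j-f\|_\beta^{\,p\theta/m}$, hence $\Jp(u_j,u)\leq C\|f_j-f\|_\beta^{\,p\theta/(m(p+m))}$. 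Letting $s\uparrow s_\infty$ makes $\theta$ increase to a supremum $\theta_\infty$, and a short computation shows $\frac{p\theta_\infty}{m(p+m)}$ equals $\frac{\beta-1}{(p+m)\beta}$ when $\beta=\frac nm$ and $\frac{\beta m(n+p)-n(m+p)}{m(p+m)(\beta(n+m)-n)}$ when $\beta<\frac nm$; since $s<s_\infty$ is required, this is exactly the asserted open range for $\gamma$. Finally, $\gamma>0$ together with $\|f_j-f\|_\beta\to0$ yields $\Jp(u_j,u)\to0$.

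The main obstacle is the low‑order stability estimate: Theorem~\ref{cp} must be invoked for the possibly unbounded solutions $u_j,u\in\mathcal E_{0,m}(\Omega)$, and one has to handle $\{u_j<u\}$ and $\{u<u_j\}$ simultaneously and make the constant independent of $j$, so that the densities enter only through $\|f_j-f\|_\beta$. Once this estimate is secured, the remainder — verifying $p\alpha<s_\infty$, optimizing the interpolation exponent, and matching it against the three stated formulas for $\gamma$ — is a routine, if slightly tedious, computation.
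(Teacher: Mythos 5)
Your proposal is correct, and its skeleton is the paper's own argument: both rest on the low-order stability estimate obtained from Theorem~\ref{cp} with a fixed $w=\varphi_0\in\mathcal E^0_m(\Omega)$ satisfying $\HH(\varphi_0)=dV_{2n}$ (your $L^m$-estimate is precisely the paper's bound on $\operatorname J_{p_0}$, since $p_0\alpha=m$), on uniform energy bounds for the solutions in terms of $\|f\|_\beta$, and on an interpolation whose auxiliary exponent is pushed to its supremum ($p'\to p_\infty$ in the paper, $s\to s_\infty$ in yours), producing the same three formulas for $\gamma$. The one implementation difference is where the interpolation happens: the paper interpolates the functionals $\operatorname J_q$ directly against the measure $\HH(\operatorname{U}_{f_j})+\HH(\UU)$ and bounds the top term $\int_\Omega|\operatorname{U}_{f_j}-\UU|^{p'}(\HH(\operatorname{U}_{f_j})+\HH(\UU))$ by $e_{p'}(\operatorname{U}_{f_j}+\UU)$, using superadditivity of $\HH$ together with Theorem~\ref{thm_holder} and Lemma~\ref{l1}, whereas you first apply H\"older with the densities and then interpolate Lebesgue norms between $L^m$ and $L^s$, converting energy bounds into $L^s$-bounds via the Sobolev-type inequality of~\cite{AC-P}; the limiting exponents come out identical, as your computation confirms. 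One shared, equally minor, imprecision: to bound $e_{p'}(\operatorname{U}_{f_j})$ for all $p'<p_\infty$ (respectively $\|\operatorname{U}_{f_j}\|_{L^s}$ for all $s<s_\infty$) uniformly in $j$, Lemma~\ref{l1} alone does not suffice — one needs the iteration of Lemma~\ref{l2}, together with the observation that each iteration step preserves the uniform constant in $e_{q,m}(u)\leq D\|f\|_\beta^{(m+q)/m}$; the paper cites only Lemma~\ref{l1} at this point, and your appeal to ``Lemma~\ref{l1} and the Sobolev type inequality'' carries the same small gap, filled in exactly the same way.
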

\begin{proof}
Let $\frac 1{\alpha}+\frac {1}{\beta}=1$, and let $p_0=\frac {m(\beta-1)}{\beta}$. Then, choose $\varphi_0\in \mathcal E_{m}^{0}(\Omega)$ such that $\HH(\varphi_0)=dV_{2n}$. Since $\|f_j-f\|_{\beta}\to 0$, as $j\to \infty$, we can assume
\[
\|f_j-f\|_{\beta}\leq \frac 1{2^j}.
\]
Thanks to Theorem~\ref{cp},
\begin{multline*}
\operatorname J_{p_0}(\operatorname{U}_{f_j},\UU)^{p_0+m}=\int_{\Omega}|\operatorname U_{f_j}-\UU|^{p_0}(\HH(\operatorname U_{f_j})+\HH(\UU))\\
= \int_{\Omega}|\operatorname U_{f_j}-\UU|^{p_0}(f_j+f)dV_{2n}\leq \left(\int_{\Omega}|\operatorname U_{f_j}-\UU|^{p_0\alpha}dV_{2n}\right)^{\frac {1}{\alpha}}\|f_j+f\|_{\beta}\\
\leq\left(m!\|\varphi_0\|^m_{\infty}\int_{\Omega}|f_j-f|dV_{2n}\right)^{\frac {1}{\alpha}}\|f_j+f\|_{\beta}\\
\leq \left(m!\|\varphi_0\|^m_{\infty}\right)^{\frac {\beta-1}{\beta}}(2\|f\|_{\beta}+1)V_{2n}(\Omega)^{\frac {(\beta-1)^2}{\beta^2}}\|f_j-f\|_{\beta}^{\frac {\beta-1}{\beta}}=C\|f_j-f\|_{\beta}^{\frac {p_0}{m}}.
\end{multline*}
If $p<\frac {m(\beta-1)}{\beta}=p_0$, then
\begin{multline*}
\Jp(\operatorname{U}_{f_j},\UU)^{p+m}=\int_{\Omega}|\operatorname U_{f_j}-\UU|^p(\HH(\operatorname U_{f_j})+\HH(\UU)) \\ \leq \operatorname J_{p_0}(\operatorname{U}_{f_j},\UU)^{\frac {p(p_0+m)}{p_0}}\|f_j+f\|_{1}^{\frac {p_0-p}{p_0}},
\end{multline*}
and we can use the estimation above to obtain
\[
\operatorname J_{p}(\operatorname{U}_{f_j},\UU)\leq C\|f_j-f\|_{\beta}^{\frac {p}{m(p+m)}}.
\]

Next, assume that $p_{\infty}>p>p_0$, and choose $p'\in (p,p_{\infty})$. Therefore, from $\operatorname{U}_{f_j},\UU\in \mathcal E_{p',m}(\Omega)$ it follows
\begin{multline}\label{stab1}
\operatorname J_{p}(\operatorname{U}_{f_j},\UU)^{p+m}=\int_{\Omega}|\operatorname U_{f_j}-\UU|^{p}(\HH(\operatorname U_{f_j})+\HH(\UU))\\
\leq\left(\operatorname J_{p_0}(\operatorname{U}_{f_j},\UU)\right)^{\frac {(m+p_0)(p'-p)}{p'-p_0}}\left(\int_{\Omega}|\operatorname U_{f_j}-\UU|^{p'}(\HH(\operatorname U_{f_j})+\HH(\UU))\right)^{\frac {p-p_0}{p'-p_0}}.
\end{multline}
From Theorem~\ref{thm_holder}, and Lemma~\ref{l1}, we get that the second term in~(\ref{stab1}) is bounded by
\begin{multline*}
\int_{\Omega}|\operatorname U_{f_j}-\UU|^{p'}(\HH(\operatorname U_{f_j})+\HH(\UU))\leq e_{p'}(\operatorname U_{f_j}+\operatorname U_{f})\\
\leq D^{\frac {m+p'}{m}}\left(e_{p'}(\operatorname U_{f_j})^{\frac {1}{m+p'}}+e_{p'}(\UU)^{\frac {1}{m+p'}}\right)^{m+p'}
\leq D'\left(\|f_j\|_{\beta}^{\frac {1}{m}}+\|f\|_{\beta}^{\frac {1}{m}}\right)^{m+p'}\\
\leq D'\left((\|f\|_{\beta}+1)^{\frac {1}{m}}+\|f\|_{\beta}^{\frac {1}{m}}\right)^{m+p'}
\end{multline*}
where the constants $D$ and $D'$ do not depend on $\operatorname U_{f_j}$ and $\UU$.
Finally, we obtain
\begin{multline*}
\operatorname J_{p}(\operatorname{U}_{f_j},\UU)\leq C\|f_j-f\|_{\beta}^{\frac {p_0(p'-p)}{m(p+m)(p'-p_0)}}=C\|f_j-f\|_{\beta}^{\frac {(\beta-1)(p'-p)}{(p+m)(\beta p'-m(\beta-1))}}\\ =C\|f_j-f\|_{\beta}^{\gamma}.
\end{multline*}
Passing to the limit with $p'\to p_{\infty}$, we get desired conclusion.
\end{proof}

\section{K\"ahler manifold case}

Let $n\geq 2$, $p\geq 0$, and let $1\leq m< n$. Assume that $(X,\omega)$ is a connected and compact K\"ahler manifold of complex dimension $n$, where $\omega$ is a K\"ahler form on $X$ such that $\int_{X}\omega^n=1$. For any $u\in \mathcal {SH}_m(X,\omega)$, let
\[
\omega_u=dd^cu+\omega.
\]
and the complex Hessian operator is defined by
\[
\operatorname{H}_m(u):=\omega_u^m\wedge \omega^{n-m}.
\]
We define the class of \emph{$(\omega,m)$-subharmonic functions with bounded $(p,m)$-energy} as
\[
\mathcal E_{p,m}(X,\omega):=\left\{u\in \mathcal E_m(X,\omega): u\leq 0, \int_X(-u)^p\operatorname{H}_m(u)<\infty\right\},
\]
where
\[
\mathcal E_m(X,\omega)=\left\{u\in \mathcal {SH}_m(X,\omega): \int_X\operatorname{H}_m(u)=1 \right\}.
\]
Additionally, information on  $(\omega,m)$-subharmonic functions defined on compact K\"ahler manifolds we refer to~\cite{ACchar,DL,GLZ,LN, LN2}.

In this section, we shall arrive in Theorem~\ref{mt_Kahler} to a regularity result on compact K\"ahler manifold corresponding to that of Theorem~\ref{mt}.
With a few minor changes we are able to follow Section~\ref{ms}. The counterpart of Lemma~\ref{l1} is as follows.

\begin{lemma}
If $u\in \mathcal E_{p,m}(X,\omega)\cap \mathcal M_{\beta}$, $p\geq0$, then $u\in \mathcal E_{q,m}(X,\omega)$, for
\[
\begin{cases}
0<q<\frac {(\beta-1)n(1+p)}{\beta(n-m)},  &  \text{ when }\beta> \frac {n(1+p)}{n+mp};\\[2mm]
0<q\leq p, &  \text{ when } \beta\leq \frac {n(1+p)}{n+mp}.
\end{cases}
\]
\end{lemma}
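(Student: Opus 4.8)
The plan is to follow the proof of Lemma~\ref{l1} almost verbatim, replacing the Sobolev-type inequality for $\E_{p,m}(\Omega)$ on bounded domains with its counterpart on the compact K\"ahler manifold $(X,\omega)$. The only structural difference between the two settings is that the background metric $dd^c|z|^2$ on $\Omega$ is replaced by the K\"ahler form $\omega$, and the Hessian operator is built from $\omega_u=dd^cu+\omega$ rather than $dd^cu$. As a consequence, the exponent governing the relevant Sobolev embedding shifts: where the Euclidean estimate $\|u\|_{\alpha q}\leq C\, e_{q,m}(u)^{1/(m+q)}$ holds for $\alpha q<\frac{n(m+p)}{n-m}$, the compact analogue should hold for $\alpha q<\frac{n(1+p)}{n-m}$. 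Tracing this single change through the computation is what produces the shifted thresholds $\frac{n(1+p)}{n+mp}$ and $\frac{(\beta-1)n(1+p)}{\beta(n-m)}$ in the statement.

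First I would fix an approximating sequence $u_j$ with $\HH(u_j)=f_j\,\omega^n$, where $f_j=\min(j,f)$, so that $u_j\in \E_{p,m}(X,\omega)\cap L^{\infty}$ and $u_j\searrow u$; one should cite the existence and convergence of such a sequence from the references on $(\omega,m)$-subharmonic functions (\cite{ACchar,LN,LN2}). Then, in the case $\beta>\frac{n(1+p)}{n+mp}$, which is exactly the inequality $\frac{(\beta-1)n(1+p)}{\beta(n-m)}>p$, I would apply H\"older's inequality with conjugate exponents $\frac1\alpha+\frac1\beta=1$ to
\[
\int_X(-u_j)^q\HH(u_j)=\int_X(-u_j)^q f_j\,\omega^n\leq\left(\int_X(-u_j)^{q\alpha}\omega^n\right)^{\frac1\alpha}\left(\int_X f_j^{\beta}\omega^n\right)^{\frac1\beta},
\]
and invoke the compact Sobolev-type inequality to conclude finiteness provided $\alpha q<\frac{n(1+p)}{n-m}$, which unwinds precisely to $0<q<\frac{(\beta-1)n(1+p)}{\beta(n-m)}$. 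Letting $j\to\infty$ and using monotone convergence, together with the uniform bound on $\sup_j e_{q,m}(u_j)$ coming from the same estimate, gives $u\in\E_{q,m}(X,\omega)$.

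For the complementary case $\beta\leq\frac{n(1+p)}{n+mp}$, which is $\frac{(\beta-1)n(1+p)}{\beta(n-m)}\leq p$, the argument above already yields membership for every $q$ up to that threshold, hence for all $q<p$; the standard monotonicity of the energy classes, $\E_{p,m}(X,\omega)\subset\E_{q,m}(X,\omega)$ for $q\leq p$, then upgrades this to $0<q\leq p$. The main obstacle I anticipate is not the algebra but locating and correctly quoting the exact form of the Sobolev-type inequality on $(X,\omega)$ with the exponent $\frac{n(1+p)}{n-m}$; the Euclidean reference \cite[Theorem~5.4]{AC-P} has a different constant in the numerator ($m+p$ versus $1+p$), and one must verify that the compact statement carries the value $1+p$, since the whole shift in the thresholds hinges on this. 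Once that inequality is cited in the correct form, the remainder is a direct transcription of the proof of Lemma~\ref{l1}.
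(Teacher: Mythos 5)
Your core idea is exactly the paper's: transcribe the proof of Lemma~\ref{l1}, replacing the Euclidean Sobolev-type inequality by its compact counterpart, and the exponent you anticipated, $\alpha q<\frac{n(1+p)}{n-m}$, is precisely what the paper invokes (it cites \cite[Theorem~4.4]{ACchar}: if $u\in\mathcal E_{p,m}(X,\omega)$ then $u\in L^q(X)$ for $q<\frac{(p+1)n}{n-m}$). The algebra unwinding the thresholds, and the reduction of the case $\beta\leq\frac{n(1+p)}{n+mp}$ to monotonicity of the classes, are both correct.

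However, your opening step would fail on a compact manifold. The equation $\operatorname{H}_m(u_j)=\min(j,f)\,\omega^n$ has no solution when $f$ is unbounded: every competitor $v$ in the relevant class satisfies the mass constraint $\int_X\operatorname{H}_m(v)=\int_X\omega^n=1$ (this is built into the definition of $\mathcal E_m(X,\omega)$, and follows from Stokes' theorem for bounded functions), whereas $\int_X\min(j,f)\,\omega^n<\int_Xf\,\omega^n=1$. So the truncated densities violate the compatibility condition, and the references you cite only solve $\operatorname{H}_m(v)=g\,\omega^n$ for normalized $g$; you would have to rescale by $c_j=\bigl(\int_X\min(j,f)\,\omega^n\bigr)^{-1}$, and even then the claim $u_j\searrow u$ (to the given $u$, not merely to some solution) needs a uniqueness argument. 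The good news is that no approximation is needed at all: unlike the domain case, where $\mathcal E_{q,m}(\Omega)$ is defined via decreasing sequences from $\mathcal E^0_m(\Omega)$, the class $\mathcal E_{q,m}(X,\omega)$ is defined directly as the set of $u\in\mathcal E_m(X,\omega)$, $u\leq 0$, with $\int_X(-u)^q\operatorname{H}_m(u)<\infty$, and your hypothesis already places $u$ in $\mathcal E_m(X,\omega)$. So apply H\"older directly to $u$ itself:
\[
\int_X(-u)^q\operatorname{H}_m(u)=\int_X(-u)^qf\,\omega^n\leq\left(\int_X(-u)^{q\alpha}\,\omega^n\right)^{\frac{1}{\alpha}}\left(\int_Xf^{\beta}\,\omega^n\right)^{\frac{1}{\beta}},
\]
which is finite for $q\alpha<\frac{n(1+p)}{n-m}$ by \cite[Theorem~4.4]{ACchar}. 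This single estimate is the whole proof of the first case, and in the second case the inclusion $\mathcal E_{p,m}(X,\omega)\subset\mathcal E_{q,m}(X,\omega)$ for $q\leq p$ is immediate on $X$ (split the integral over $\{-u\leq1\}$ and $\{-u>1\}$ and use that the total mass is $1$). With the approximation step deleted, your argument is complete and coincides with the paper's intended one.
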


 From~\cite[Theorem 4.4]{ACchar}, it follows that if $u\in \mathcal E_{p,m}(X,\omega)$, then $u\in L^q(X)$, for $q<\frac {(p+1)n}{n-m}$, then again using the reasoning from Section~\ref{ms} we get the counterpart of Lemma~\ref{l2}.

\begin{lemma}
If $u\in \mathcal E_{p,m}(X,\omega)\cap \mathcal M_{\beta}$, then $u\in \mathcal E_{q,m}(X,\omega)$, for
\[
\begin{cases}
0<q<\frac {n(\beta -1)}{n-\beta m}, & \text{ when } \frac {n(1+p)}{n+mp}<\beta<\frac nm; \\
q>0,  & \text{ when } \beta=\frac nm.
\end{cases}
\]
\end{lemma}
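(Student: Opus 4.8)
The plan is to mirror the iteration used in the proof of Lemma~\ref{l2}, now driven by the preceding Kähler lemma (the counterpart of Lemma~\ref{l1}) as the single-step engine. Starting from $u\in\mathcal E_{p,m}(X,\omega)$, that lemma upgrades membership to $\mathcal E_{q,m}(X,\omega)$ for $q<\frac{(\beta-1)n(1+p)}{\beta(n-m)}$, provided the threshold inequality $\beta>\frac{n(1+p)}{n+mp}$ holds. This suggests iterating with $p'=\alpha(1+p)$, where $\alpha=\frac{(\beta-1)n}{\beta(n-m)}$, producing the sequence $p_N=\alpha(1+p_{N-1})$, $p_0=p$. Note the only change from the $\mathbb C^n$ recurrence $p_N=\alpha(m+p_{N-1})$ is that the additive constant $m$ is replaced by $1$.

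First I would record two elementary facts about $g(t)=\frac{n(1+t)}{n+mt}$: it is strictly increasing (its derivative equals $\frac{n(n-m)}{(n+mt)^2}>0$ since $m<n$), and $g(t)\nearrow\frac nm$ as $t\to\infty$. A direct computation gives $g(t)=\beta$ exactly at $t=p_\infty:=\frac{n(\beta-1)}{n-\beta m}$, which is precisely the fixed point of the affine map $t\mapsto\alpha(1+t)$.

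When $\beta=\frac nm$ we have $\alpha=1$, so $p_N=1+p_{N-1}\to\infty$; since $g<\frac nm=\beta$ everywhere, the threshold hypothesis holds at each step, and iterating the single-step lemma yields $u\in\mathcal E_{q,m}(X,\omega)$ for all $q>0$. When $\beta<\frac nm$ we have $\alpha<1$. Because $\beta>g(p)$ is equivalent to $p<p_\infty$, the monotonicity of $g$ together with $g(p_\infty)=\beta$ shows by induction that $\{p_N\}$ is strictly increasing and bounded above by $p_\infty$ (indeed $p_{N-1}<p_\infty$ forces $p_N=\alpha(1+p_{N-1})<\alpha(1+p_\infty)=p_\infty$). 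Hence $p_N\nearrow p_\infty$, and the conclusion $u\in\mathcal E_{q,m}(X,\omega)$ holds for every $q<p_\infty=\frac{n(\beta-1)}{n-\beta m}$.

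The one point requiring care — and the main, if mild, obstacle — is verifying that the single-step hypothesis $\beta>\frac{n(1+p)}{n+mp}$ is self-propagating along the iteration. This reduces exactly to the monotonicity of $g$ and the identification of the level $\beta$ with the fixed point $p_\infty$; once these are in hand, both the admissibility of each successive application of the preceding lemma and the convergence of $\{p_N\}$ to the claimed threshold are automatic.
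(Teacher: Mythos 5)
Your proposal is correct and follows essentially the same route as the paper: the paper's own (terse) argument is precisely to rerun the iteration from the proof of Lemma~\ref{l2}, replacing the single-step engine by the K\"ahler counterpart of Lemma~\ref{l1}, which changes the recurrence from $p_N=\alpha(m+p_{N-1})$ to $p_N=\alpha(1+p_{N-1})$ with fixed point $\frac{n(\beta-1)}{n-\beta m}$. Your added verification that the threshold condition $\beta>\frac{n(1+p)}{n+mp}$ propagates along the iteration (via monotonicity of $g$ and identification of its level set with the fixed point) is a careful spelling-out of what the paper leaves implicit.
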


Since, if $u\in \mathcal {SH}_m(X,\omega)$, then $u\in L^q(X)$, for $q<\frac n{n-m}$ (\cite[Corollary 6.7]{LN}), we get the analogue of Theorem~\ref{mt}.

\begin{theorem}\label{mt_Kahler}  For $1<\beta\leq\frac nm$, let $u\in \mathcal M_{\beta}\cap \E_m(X,\omega)$. Then the following holds:
\begin{enumerate}\itemsep2mm
\item if $\beta=\frac nm$, then $u\in \mathcal E_{p,m}(X,\omega)$, for all $p>0$. Furthermore, $u\in L^q(X)$, for all $q>0$;

\item if $\beta<\frac nm$, then $u\in \mathcal E_{p,m}(X,\omega)$, for $p<\frac {n(\beta -1)}{n-\beta m}=p_0$.  Furthermore, $u\in L^q(X)$, for all
\[
q<\frac {(p_0+1)n}{n-m}=\frac {\beta n }{n-\beta m}.
\]
\end{enumerate}
\end{theorem}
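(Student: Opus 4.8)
The plan is to mirror the proof of Theorem~\ref{mt} in the domain case, using the two preceding lemmas (the K\"ahler counterparts of Lemma~\ref{l1} and Lemma~\ref{l2}) together with the stated integrability results from the literature. Concretely, suppose $u\in \mathcal M_{\beta}\cap \mathcal E_m(X,\omega)$ with $1<\beta\leq \frac nm$. The first observation I would record is that since $\operatorname{H}_m(u)=f\omega^n$ with $f\in L^{\beta}$ and $\beta>1$, the density $f$ is in particular in $L^1$; because $\int_X \operatorname{H}_m(u)=1$ this is consistent, and more importantly the boundedness assumption on the Hessian measure that drives the energy estimates is in force. Thus $u\in \mathcal E_{0,m}(X,\omega)$ to begin with, i.e.\ we may start the iteration from $p=0$.

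Next I would apply the first lemma of this section (the analogue of Lemma~\ref{l1}) with $p=0$. Reading off its conclusion at $p=0$, the threshold $\frac{n(1+p)}{n+mp}=1$, and since $\beta>1$ we are in the first case, giving $u\in \mathcal E_{q,m}(X,\omega)$ for $q<\frac{(\beta-1)n}{\beta(n-m)}=:p_0'$. One then checks the bootstrap condition $\beta>\frac{n(1+p_0')}{n+mp_0'}$ holds, exactly as in the domain proof where the inequality $\beta>\frac{n(m+p_0)}{m(n+p_0)}$ was verified; this is the step that licenses invoking the second lemma. Applying the analogue of Lemma~\ref{l2} then yields, in the case $\beta=\frac nm$, membership in $\mathcal E_{p,m}(X,\omega)$ for all $p>0$, and in the case $\beta<\frac nm$, membership for all $p<\frac{n(\beta-1)}{n-\beta m}=p_0$, since the iteration $p_N=\alpha(1+p_{N-1})$ (now with the ``$m+p$'' replaced by ``$1+p$'' appropriate to the manifold Sobolev exponent) either diverges when $\alpha=\frac{(\beta-1)n}{\beta(n-m)}=1$ or converges to the stated fixed point $p_0$ when $\alpha<1$.

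The integrability tail of the statement I would then extract from the two cited results. For $\beta=\frac nm$, since $u\in \mathcal E_{p,m}(X,\omega)$ for every $p>0$, the cited inclusion $\mathcal E_{p,m}(X,\omega)\subset L^q(X)$ for $q<\frac{(p+1)n}{n-m}$ (from~\cite[Theorem 4.4]{ACchar}) gives, on letting $p\to\infty$, that $u\in L^q(X)$ for every $q>0$. For $\beta<\frac nm$, feeding the endpoint $p_0=\frac{n(\beta-1)}{n-\beta m}$ into the same inclusion produces $u\in L^q(X)$ for $q<\frac{(p_0+1)n}{n-m}$, and the arithmetic simplification $\frac{(p_0+1)n}{n-m}=\frac{\beta n}{n-\beta m}$ is a short direct computation. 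The lower bound $u\in L^q$ for small $q$ in both cases is already guaranteed because $u\in \mathcal{SH}_m(X,\omega)$ implies $u\in L^q(X)$ for $q<\frac{n}{n-m}$ by~\cite[Corollary 6.7]{LN}, so no $q$ near $0$ is lost.

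The main obstacle, as in the domain setting, is ensuring the bootstrap in the second lemma is actually applicable at the first step, i.e.\ that after the initial gain from $p=0$ to $p_0'$ the hypothesis $\beta>\frac{n(1+p)}{n+mp}$ is preserved so the iteration can be launched and does not stall prematurely; this is precisely the content of the parenthetical remark ``Observe that $\beta>\frac{n(m+p_0)}{m(n+p_0)}$'' in the proof of Theorem~\ref{mt}, transcribed with $m+p$ replaced by $1+p$. The remaining delicate point is bookkeeping the two different Sobolev exponents: the manifold estimate uses $\frac{(p+1)n}{n-m}$ rather than the domain's $\frac{n(m+p)}{n-m}$, and one must make sure the fixed-point computation and the final $L^q$ exponent $\frac{\beta n}{n-\beta m}$ are carried out with the correct manifold version. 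Once these are matched, the argument is a faithful repetition of Section~\ref{ms}, which is why it can be stated with only ``a few minor changes.''
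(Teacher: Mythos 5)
Your proposal is correct and follows essentially the same route as the paper: the paper itself proves Theorem~\ref{mt_Kahler} by invoking the two K\"ahler counterparts of Lemmas~\ref{l1} and~\ref{l2} (with the Sobolev exponent $\frac{n(m+p)}{n-m}$ replaced by $\frac{(p+1)n}{n-m}$, hence the iteration $p_N=\alpha(1+p_{N-1})$), starting the bootstrap from $p=0$ and extracting the $L^q$ statements from \cite[Theorem 4.4]{ACchar} and \cite[Corollary 6.7]{LN}, exactly as you do. Your fixed-point computation $p_0=\frac{n(\beta-1)}{n-\beta m}$ and the simplification $\frac{(p_0+1)n}{n-m}=\frac{\beta n}{n-\beta m}$ match the paper's statement, and your verification that the bootstrap hypothesis $\beta>\frac{n(1+p)}{n+mp}$ persists along the iteration (being equivalent to $p<p_0$ when $\beta<\frac nm$) supplies the same check the paper makes in the domain case.
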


\end{document}